\newtheorem{theorem}{Theorem}[section]
\newtheorem{lemma}[theorem]{Lemma}
\newtheorem{corollary}[theorem]{Corollary}
\newtheorem{proposition}[theorem]{Proposition}
\theoremstyle{definition}
\theoremstyle{remark}
\newtheorem{remark}[theorem]{Remark}
\numberwithin{equation}{section}
\newcommand{\on}[1]{\operatorname{#1}}
\newcommand{\abs}[1]{\lvert#1\rvert}
\newcommand{\un}[1]{\underline{#1}}
\newcommand{\mf}[1]{\mathfrak{#1}} 
\newcommand{\xg}{\backslash} 
\newcommand{\lieg}{\mathfrak{g}} 
\newcommand{\lieh}{\mathfrak{h}} 
\newcommand{\liel}{\mathfrak{l}} 
\newcommand{\liege}{\mathfrak{g}_{\bar{0}}} 
\newcommand{\liego}{\mathfrak{g}_{\bar{1}}} 
\newcommand{\lieb}{\mathfrak{b}}
\newcommand{\lien}{\mathfrak{n}}
\newcommand{\ds}{\oplus} 
\newcommand{\ten}{\otimes} 
\newcommand{\fc}{\mathbb{C}} 
\newcommand{\fz}{\mathbb{Z}} 
\newcommand{\fk}{\mathbb{K}} 
\newcommand{\bb}[1]{\mathbb{#1}} 
\newcommand{\ra}{\rightarrow} 
\newcommand{\sur}{\twoheadrightarrow} 
\newcommand{\mt}{\mapsto} 
\newcommand{\iso}{\simeq} 
\newcommand{\pr}[1]{#1^{\prime}}
\begin{document}

\title[Block Degeneracy]
{Block Degeneracy for Graded Lie Superalgebras of Cartan Type}
 \subjclass[2000]{17B10; 17B20; 17B35; 17B50}
 \keywords{Lie Superalgebra of Cartan Type, Block, Restricted Supermodule, Baby Verma Module }
\author{Ke Ou}
\address{Department of Statistics and Mathematics, Yunnan University of Finance and Economics,
Kunming, 650221, China.}\email{justinok1311@hotmail.com}

\begin{abstract}
	Let $\fk$ be an algebraically closed field  of characteristic $ p>0. $ In this short note, we illustrate a class of Lie superalgebras over $ \fk $ such that the category of restricted supermodules is of one block. As an application, if $ p>3 $ and $ \lieg $ is a graded restricted Cartan type Lie superalgebra of type W, S and H, then the category of restricted supermodules of $ \lieg $ is of one block.
\end{abstract}

\maketitle
\section{Introduction }
A Lie superalgebra $ \lieg=\liege\ds\liego $ over $ \fk $ is called restricted if $ (\liege,[p]) $ is a restricted Lie algebra with $ p $-mapping $ [p]: \lieg_{\bar{0}}\ra \lieg_{\bar{0}} $  and $ \liego $ is a restricted $ \liege $ module via the adjoint action (cf. \cite{Pe}).
Let $ (\lieg,[p]) $  be a restricted Lie superalgebra and $ U(\lieg) $ be the enveloping superalgebra of $ \lieg. $ One can define the so-called restricted enveloping superalgebra $ u(\lieg)=U(\lieg)/I_p $ where $ I_p $ is the $ \fz_2 $-graded two-sided ideal generated by $ \{ x^p-x^{[p]}\mid x\in \liege \}. $ A $ \lieg $ supermodule $ (V=V_{\bar{0}}\ds V_{\bar{1}},\rho) $ is called restricted if $ \rho $ satisfies $ \rho(x^{[p]})=\rho(x)^p $ for all $ x\in\liege. $ All restricted $ \lieg $-supermodules constitute a full subcategory of the $ \lieg $-supermodule category which coincide with the $ u(\lieg) $-supermodule category denoted by $ u(\lieg)$-\text{smod}. We call $ u(\lieg) $ is of one block if $ u(\lieg) $-smod is of one block.

Over the past decades, the study of modular representations of restricted Lie (super)algebras in prime characteristic has made significant progress (see \cite{HN,Na,YS1,YS2,Zh} for examples).
When $ \lieg=W(0,n) $ over $ \fc, $  Shomron proves in \cite{Sh} that the category of finite-dimensional representations decomposes into blocks parametrized by $ (\fc/\fz)\times\fz_2 $. In contrast to complex case, if either $ \lieg=X(m,1) $ is a Cartan type Lie algebra where $  X\in\{W,S,H,K\} $ (\cite{HN}) or $ \lieg=W(0,n,1) $ is a Cartan type Lie superalgebra (\cite{YS1}) over $ \fk $, the category of restricted (super)modules has only one block. In this paper, we generalize this degeneracy phenomenon of restricted supermodules to the so-called restricted Cartan type Lie superalgebras $ X(m,n,1) $ where $ X\in\{ W,S,H \}. $ 

Our paper is organized as follows. In section 2, we illustrate a class of Lie superalgebras over $ \fk $ such that the category of restricted supermodules is of one block. Section 3 is concerned with the structure of the Cartan type Lie superalgebras. Applying the results in section 2, we obtain the following main theorem in section 4:

\begin{theorem}$ \on{(see\ Theorem\ \ref{main thm})} $
	Let $ \fk $ be an algebraically closed field with characteristics $ p > 3, $ and $ \lieg=X(m,n,1),\ X\in\{ W,S,H \}, $ be a graded restricted Lie superalgebra of Cartan type over $ \fk $ except if $ X=H $ with $ n=4. $
	
	Then $ u(\lieg) $ is of one block.
\end{theorem}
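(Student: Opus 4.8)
The plan is to deduce the theorem from the one-block criterion of Section~2 by a single structural reduction, so that the real content becomes a case-by-case verification using the description of $\lieg = X(m,n,1)$ given in Section~3. Recall that each such $\lieg$ carries an intrinsic $\fz$-grading by derivation degree, $\lieg = \bigoplus_{i=-1}^{s}\lieg_i$, in which $\lieg_{-1}$ is the abelian space of constant-coefficient derivations (on which the $p$-operation vanishes), $\liep := \bigoplus_{i\ge 0}\lieg_i$ is a restricted parabolic subalgebra, and $\lieg_0$ is a classical Lie superalgebra: $\lieg_0 \iso \mathfrak{gl}(m|n)$ for type $W$, the special linear subalgebra $\mathfrak{sl}(m|n)$ for type $S$, and an orthosymplectic algebra $\mathfrak{osp}(n|m)$ for type $H$. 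For type $W$ the grading is inner, implemented by the Euler element $E\in\lieg_0$, whereas the special and Hamiltonian constraints may remove $E$ and the grading must be tracked intrinsically. The restricted simple supermodules are the heads $L(\lambda)$ of the baby Verma modules $Z(\lambda)=u(\lieg)\ten_{u(\liep)}L_0(\lambda)$, with $L_0(\lambda)$ ranging over the restricted simple $\lieg_0$-supermodules and $\lieg_{>0}$ acting by zero; hence $u(\lieg)$ is of one block exactly when the $\on{Ext}^1$-quiver on the family $\{L(\lambda)\}$ is connected, and it is precisely this connectivity that the Section~2 hypotheses are designed to force.

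The linking I would produce from the negative and positive parts of the grading. The key point is that $u(\lieg_0)$ itself has \emph{many} blocks (already $u(\mathfrak{gl}(m|n))$ does), so the whole content of the degeneracy phenomenon is that the Cartan-type grading supplies extra extensions merging them: the root vectors in $\lieg_{\pm 1}$ yield nonsplit extensions between $L(\lambda)$ and simples whose highest weight is shifted by a weight of the natural $\lieg_0$-module $\lieg_{-1}$, and (for type $W$) the inner element $E$ distributes the composition factors of $Z(\lambda)$ across the graded layers so that its eigenvalues, read modulo $p$, tie together weights separated in the $\fz$-grading. The Section~2 criterion packages this into a connectivity condition on the $\lieg_{\pm1}$-translations, which I would verify directly from Section~3. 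For type $W$ this is the immediate super-analogue of the $W(0,n,1)$ computation of Yao--Shu: since $\lieg_0=\mathfrak{gl}(m|n)$ carries the full integral weight lattice $\wt$, the shifts by weights of $\lieg_{-1}$ visibly connect every block into one.

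For types $S$ and $H$ the same scheme applies, but $\lieg_0$ is a proper subalgebra, so the divergence-free (for $S$) and Hamiltonian/contact (for $H$) constraints delete some of the weight shifts coming from $\lieg_{\pm 1}$ and cut down the available weights; moreover for $S$ the special condition removes the Euler element itself when $m+n\not\equiv 0$, so the organizing role of $E$ must be replaced by the intrinsic grading. The substantive step is to confirm that the surviving shifts still generate a connected linkage graph: for $S$ one checks that passing to the traceless part $\mathfrak{sl}(m|n)$ does not sever the connection between blocks, and for $H$ one checks the analogous statement for $\mathfrak{osp}(n|m)$. In each non-exceptional case the even part of $\lieg_0$ remains suitably indecomposable and the natural module $\lieg_{-1}$ stays connected, so the Section~2 hypotheses hold and $u(\lieg)$ is of one block.

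The main obstacle, and the source of the excluded case, is exactly type $H$ with $n=4$. Here the orthogonal part $\mathfrak{so}(n)$ of $\lieg_0=\mathfrak{osp}(n|m)$ becomes $\mathfrak{so}(4)\iso\mathfrak{sl}_2\ds\mathfrak{sl}_2$, which is not simple; the weight lattice and the decomposition of the natural module $\lieg_{-1}$ factor accordingly, and the translations by weights of $\lieg_{-1}$ no longer connect all blocks of $u(\lieg_0)$, so the linkage graph disconnects and the connectivity hypothesis of Section~2 fails. I would therefore isolate $n=4$ at the outset, establish connectivity for $H(m,n,1)$ with $n\ne 4$ using the simplicity of $\mathfrak{so}(n)$ in the weight-shift argument above, and record $H(m,4,1)$ as the genuine exclusion. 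Assembling the three verified families through the Section~2 criterion then yields that $u(\lieg)$ is of one block in all the asserted cases.
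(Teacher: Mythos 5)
There is a genuine gap: your proposal is an outline whose central steps are asserted rather than proved, and the framework you set up is not the one the paper's Section~2 actually provides, so nothing in the paper closes the gap for you. Concretely, you induce from the parabolic $\liep=\bigoplus_{i\ge 0}\lieg_i$ of the derivation-degree grading and claim that ``root vectors in $\lieg_{\pm 1}$ yield nonsplit extensions'' whose translations connect all blocks of $u(\lieg_0)$. Producing such nonsplit extensions (equivalently, computing enough of the $\on{Ext}^1$-quiver) is the entire content of the theorem, and you never do it; for types $S$ and $H$ you reduce to ``confirm that the surviving shifts still generate a connected linkage graph'' and then declare it holds because $\mathfrak{sl}(m|n)$ and $\mathfrak{osp}(n|m)$ are ``suitably indecomposable.'' The paper's criterion is of a different nature: it uses the triangular decomposition relative to a maximal torus (not the $\fz$-grading), induces baby Verma modules $V^{\pm}(\lambda)$ from one-dimensional weight spaces over the opposite Borels, and proves one-blockness by showing in the Grothendieck group that $[V^-(\lambda)]=\sum_{\mu}p^s2^t[V^+(\mu)]$, so that the indecomposable module $V^-(\lambda)$ links every simple. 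The verification is a concrete computation: one exhibits a subalgebra $\liel$ with $p$-nilpotent radical all of whose restricted simples are one-dimensional (Lemma~2.1), plus, for type $W$, the classical subalgebra $\liel\iso\mathfrak{pgl}(m+1|n)$ together with an automorphism swapping its Borels. Your plan contains no analogue of these computations.

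Your explanation of the excluded case is also unsupported and likely wrong in spirit. The paper excludes $H(m,4,1)$ because its method breaks down there: for $n=4$ one has $k=2$, and the only even elements $D_H(x^{(a)}f^{(c)})$ available in $\lien_1^+$ carry $\delta$-weight $0$ or $-\delta_1-\delta_2$, so one cannot find $l+k$ linearly independent weight vectors with linearly independent weights, i.e.\ hypothesis (2) of Lemma~2.1 fails. This is a limitation of the proof, not a claim that $u(H(m,4,1))$ has more than one block; your assertion that the ``linkage graph disconnects'' because $\mathfrak{so}(4)\iso\mathfrak{sl}_2\ds\mathfrak{sl}_2$ is not simple is a statement about the theorem being false in that case, which neither you nor the paper establishes.
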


As I know, F.Duan, B.Shu and Y.Yao obtain similar results in \cite{DSY} by a different method.

Entire the whole paper, denote $ I=\{ 0,1,\cdots,p-1 \}. $

\section{Restricted Lie superalgebras with triangular Decomposition}
Let $ \lieg=\liege\ds\liego $ be a restricted Lie superalgebra which admits a triangular decomposition relative to a maximal torus $ \lieh $ of $ \liege: $
\[ \lieg=\liego^-\ds\lien^-\ds \lieh\ds\lien^+ \ds \liego^+ \]
where $ \liege=\lien^-\ds \lieh\ds\lien^+. $ Recall that $ \lien^{\pm} $ are $ p $-nilpotent subalgebras. Set $ \lieb_{\lieg}^{\pm}=\liego^{\pm}\ds\lien^{\pm} \ds \lieh $ and $\lieb_{\liege}^{\pm}=\lien^{\pm} \ds \lieh. $ Analogue to \cite{HN}, this decomposition for $ \lieg $ is long if 
\[ \text{dim}_{\fk}(\lien^- )<\text{dim}_{\fk}(\lien^+) \text{\ and\ } \text{dim}_{\fk}(\liego^-)<\text{dim}_{\fk}(\liego^+).\]

By \cite{Zh}, the iso-classes of simple restricted $ \lieg $ modules are parametrized by restricted weights $ \Lambda=\{ \lambda\in\lieh^*\mid \lambda(h^{[p]}) =\lambda(h)^p,\ \forall h\in\lieh \}. $ If $ \text{dim}(\lieh)=n, $ then $ \Lambda\iso I^n=\{ \lambda= (\lambda_1,\cdots, \lambda_n)\mid \lambda_i\in I, i=1,\cdots,n \}. $  More precisely, for a given $ \lambda\in\Lambda, $ there is a one-dimensional restricted $ \lieb_{\lieg}^+ $ module $ \fk_\lambda=\fk\cdot 1_{\lambda} $ on which $ \lieh $ acts as a scalar determined by $ \lambda $ while $ \liego^+\ds\lien^+ $ acts trivially. Then one has the so-called baby Verma module
\[ V^+(\lambda):=u(\lieg)\ten_{u(\lieb_{\lieg}^+)}\fk_\lambda \]
with simple head $ L(\lambda). $ Moreover,
For any restricted simple module $ \mf{m}, $ there is a $ \lambda\in\Lambda, $ such that $ V^+(\lambda)\sur \mf{m} $ (cf. \cite{Zh}).

Note that $ \lieb_{\lieg}^- $ also satisfies the conditions of \cite[lemma 2.2]{Zh}, then for each $ \lambda\in\Lambda, $ the one-dimensional $ u(\lieb_{\lieg}^-) $ module $ \fk_\lambda $ induces an $ u(\lieg) $ module $$  V^-(\lambda):=u(\lieg)\ten_{u(\lieb_{\lieg}^-)}\fk_\lambda, $$ 
which is indecomposable with simple head. 

For $ M\in u(\lieg) $-smod, let $ [M] $ denote the formal sum of composition factors in the Grothendick ring of $ u(\lieg) $-smod.

\begin{lemma}\label{1}
	Let $ \liel=\liel_{\bar{0}}\ds\liel_{\bar{1}} $ be a restricted Lie superalgebra which admits a triangular decomposition relative to a maximal torus $ \lieh $ of $ \liel_{\bar{0}}: $
	\[ \liel=\liel_{\bar{1}}^-\ds\lien^-_{\liel}\ds \lieh\ds\lien^+_{\liel} \ds \liel_{\bar{1}}^+ \]
	where $ \liel_{\bar{0}}=\lien^-_{\liel}\ds \lieh\ds\lien^+_{\liel}. $
	
	Assume the following:
	\begin{enumerate}
		\item $ \liel_{\bar{1}}^-\ds\lien^-_{\liel}\ds\lien^+_{\liel} \ds \liel_{\bar{1}}^+ $ is a $ p $-nilpotent $ \fz_2 $-graded ideal.
		\item $ \lien^+_{\liel} $ contains $ \textup{dim}(\lieh) $ linear independent weight vectors having linearly independent weights in $ \Lambda. $ 
	\end{enumerate}
	Then for each $ \lambda\in\Lambda, $ $ [V^-(\lambda)] $ is independent of $ \lambda $ and $$ [V^-(\lambda)]=\sum_{\mu\in\Lambda} p^s2^{t}[\fk_\mu], $$
	where $ s=\textup{dim}(\lien^+_{\liel})-\textup{dim}(\lieh),\ t=\textup{dim}(\liel_{\bar{1}}^+) $ and $ \fk_\mu $ is the one dimensional simple $ u(\liel) $ module of weight $ \mu. $
\end{lemma}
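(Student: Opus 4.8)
The plan is to reduce the computation of composition multiplicities to a purely combinatorial count of the dimensions of $\lieh$-weight spaces. First I would pin down the simple $u(\liel)$-modules. Set $\mf{a}=\liel_{\bar{1}}^-\ds\lien^-_{\liel}\ds\lien^+_{\liel}\ds\liel_{\bar{1}}^+$, which by assumption (1) is a $p$-nilpotent $\fz_2$-graded ideal with $\liel/\mf{a}\iso\lieh$. Since $\mf{a}$ is $p$-nilpotent, the augmentation ideal of $u(\mf{a})$ is nilpotent, so on any simple $u(\liel)$-module $S$ the space of $\mf{a}$-invariants is nonzero; being an ideal, $\mf{a}$ makes these invariants a $\liel$-submodule, so by simplicity $\mf{a}$ acts as zero on $S$. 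Hence $S$ is a simple $u(\liel/\mf{a})=u(\lieh)$-module. As $\lieh$ is a torus, $u(\lieh)\iso\fk[x_1,\dots,x_n]/(x_i^p-x_i)$ is semisimple (a product of $p^n$ copies of $\fk$), so the simples are exactly the one-dimensional modules $\fk_\mu$, $\mu\in\Lambda$, with $\abs{\Lambda}=p^n$ and $n=\dim\lieh$.

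Next I would turn composition multiplicities into weight-space dimensions. Because $u(\lieh)$ is semisimple, restriction to $\lieh$ is exact and every $u(\liel)$-module $V$ splits as $V=\bds_{\mu\in\Lambda}V_\mu$ into $\lieh$-weight spaces; since $\dim(\fk_\nu)_\mu=\delta_{\mu\nu}$, additivity along a composition series gives $[V:\fk_\mu]=\dim V_\mu$. Thus it suffices to show that every weight space of $V^-(\lambda)$ has dimension $p^s2^t$, independent of $\lambda$. To compute these I would invoke the super PBW theorem: fixing a weight basis $x_1,\dots,x_k$ ($k=\dim\lien^+_{\liel}$) of $\lien^+_{\liel}$ with weights $\beta_i$ and a weight basis $y_1,\dots,y_t$ of $\liel_{\bar{1}}^+$ with weights $\gamma_j$, the monomials $x_1^{a_1}\cdots x_k^{a_k}y_1^{\epsilon_1}\cdots y_t^{\epsilon_t}\cdot 1_\lambda$ with $0\le a_i\le p-1$ and $\epsilon_j\in\{0,1\}$ form a basis of $V^-(\lambda)=u(\liel)\ten_{u(\lieb_{\liel}^-)}\fk_\lambda$. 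Such a monomial has $\lieh$-weight $\lambda+\sum_i a_i\beta_i+\sum_j\epsilon_j\gamma_j$, and the restricted structure forces every $\beta_i,\gamma_j$ to lie in $\Lambda$ (a weight vector $x$ with $[h,x]=\beta(h)x$ satisfies $\beta(h^{[p]})=\beta(h)^p$ since $\on{ad}(h^{[p]})=(\on{ad}h)^p$).

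Finally comes the heart of the argument, where assumption (2) enters. The Frobenius identity $(a+b)^p=a^p+b^p$ shows $\Lambda$ is an $\ff_p$-subspace of $\lieh^*$ of dimension $n$, and assumption (2) supplies weights $\beta_1,\dots,\beta_n$ of $\lien^+_{\liel}$ that are $\ff_p$-linearly independent, hence an $\ff_p$-basis of $\Lambda$. The key observation is then an equidistribution statement: for any fixed choice of the remaining exponents $(a_{n+1},\dots,a_k,\epsilon_1,\dots,\epsilon_t)$, the assignment $(a_1,\dots,a_n)\mt\sum_{i=1}^n a_i\beta_i$ is an $\ff_p$-linear isomorphism $(\ff_p)^n\ai{\sim}\Lambda$, so it hits each element of $\Lambda$ exactly once. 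Counting over the $p^{k-n}2^t=p^s2^t$ choices of the remaining exponents, every target weight $\mu$ is attained by exactly $p^s2^t$ basis monomials; that is, $\dim V^-(\lambda)_\mu=p^s2^t$ for all $\mu\in\Lambda$, independently of $\lambda$. Combined with the previous paragraph this yields $[V^-(\lambda)]=\sum_{\mu\in\Lambda}p^s2^t[\fk_\mu]$.

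I expect the main obstacle to be not any single hard estimate but correctly setting up the reduction so that the counting is clean: one must verify that the weight-space functor is exact (so multiplicities equal dimensions), that all occurring weights genuinely lie in the finite group $\Lambda$, and above all that assumption (2) can be used to select a subfamily of the PBW exponents parametrizing $\Lambda$ bijectively while the complementary exponents remain free. Once the equidistribution is phrased as the bijectivity of an $\ff_p$-linear map, the dimension count and the $\lambda$-independence follow immediately.
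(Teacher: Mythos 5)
Your proposal is correct and follows essentially the same route as the paper's proof: reduce to one-dimensional simples via the $p$-nilpotent ideal (the paper cites \cite{Zh} for this, you supply the standard invariants argument), identify composition multiplicities with $\lieh$-weight-space dimensions, and use the PBW monomial basis together with the $n$ linearly independent weights from assumption (2) to show each weight of $\Lambda$ occurs exactly $p^s2^t$ times. The equidistribution step you isolate is precisely the paper's observation that, for each fixed choice of the remaining exponents, the span of the monomials $X^{\un{i}}$ has all weights with multiplicity one.
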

\begin{proof}
	By (1), 
	$ \text{rad}(\liel) =\liel_{\bar{1}}^-\ds\lien^-\ds\lien^+ \ds \liel_{\bar{1}}^+. $ Since $ \text{rad}(\liel) $ is $ p $-nilpotent and finite dimension, each restricted representations of $ \liel $ is one dimension \cite[lemma 2.2]{Zh}. Let $ \{ \fk_\mu\mid \mu\in\Lambda \} $ represent the set of non-isomorphic simple $ u(\liel) $ modules.
	
	The composition factors of a module can be obtained by computing its weight spaces. From (2), suppose $ n=\text{dim}(\lieh), $ $ \liel_{\bar{1}}^+ $ has  basis $ \{ z_1,\cdots,z_t \} $ and  $ \lien^+ $ has  basis $ \{ x_1,\cdots, x_n,y_1,\cdots, y_s \} $ where $ x_i$ is of weight $ \alpha_i\in\Lambda $ for each $ i=1,\cdots,n $ such that $ \alpha_1,\cdots, \alpha_n $ are linear independent. Then $ x_1^{i_1}\cdots x_n^{i_n} $ has weight $ i_1\alpha_1+\cdots +i_n\alpha_n $ for each $ (i_1, \cdots, i_n)\in I^n. $
	
	For each choice of $ \un{j}\in I^s $ and $ \un{k}\in B(t), $ as $ u(\lieh) $ module, $$ N=\textup{span}_{\fk} \{  X^{\un{i}}Y^{\un{j}}Z^{\un{k}}\mid \un{i}\in I^n \} $$ must have all weights occurring with multiplicity 1 . Since 
	$$ V^-(\lambda)=\textup{span}_{\fk}\{ X^{\un{i}}Y^{\un{j}}Z^{\un{k}}\ten  1_{\lambda}\mid \un{i}\in I^n, \un{j}\in I^s  \text{ and } \un{k}\in B(t) \}, $$ then all possible weights occuring with the same multiplicity $ p^s2^t $ in $ V^-(\lambda). $   
	
	Namely, $ [V^-(\lambda)]=\sum_{\mu\in\Lambda} p^s2^{t}[\fk_\mu] $ which is independent of $ \lambda. $
\end{proof}

\begin{proposition}
	Let $ \lieg=\lieg_{\bar{0}}\ds\lieg_{\bar{1}} $ be a restricted Lie superalgebra which admits a long triangular decomposition relative to a maximal torus $ \lieh $ of $ \lieg_{\bar{0}}: $
	\[ \lieg=\lieg_{\bar{1}}^-\ds\lien^-\ds \lieh\ds\lien^+ \ds \lieg_{\bar{1}}^+,\ \lieg_{\bar{0}}=\lien^-\ds \lieh\ds\lien^+. \]
	Assume the following:
	\begin{enumerate}
		\item $ \lieg $ has a restricted subalgebra $ \liel $ satisfies the assumptions of lemma \ref{1}.
		\item $ \lieb^-_{\liel}=\lieb_{\lieg}^-. $
		\item $ \lien^-_{\liel}=\lien^- $ has at least $ \on{dim}(\lieh) $ linearly independent vectors having linearly independent weights in $ \Lambda. $
	\end{enumerate}
	Then for each $ \lambda\in\Lambda, $ $$ [V^-(\lambda)]=\sum_{\mu\in\Lambda} p^{s}2^t[V^+(\mu)], $$ where $ s=\textup{dim}(\lien^+)-\textup{dim}(\lien^-)-\textup{dim}(\lieh),$  $ t= \textup{dim}(\liego^+)-\textup{dim}(\liego^-). $ 
\end{proposition}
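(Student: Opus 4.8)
The plan is to compute $[V^-(\lambda)]$ by pushing it through the subalgebra $\liel$, where Lemma \ref{1} applies and all simples are one-dimensional, and then to transport the resulting classes back up to $u(\lieg)$ and recognise them against the positive baby Verma modules $V^+(\mu)$. First I would use hypothesis (2): since $\lieb^-_{\liel}=\lieb^-_{\lieg}$, the module $\fk_\lambda$ is the same object over $u(\lieb^-_{\liel})$ and over $u(\lieb^-_{\lieg})$, so the chain $\lieb^-_{\liel}\se\liel\se\lieg$ and associativity of the tensor product give
\[ V^-(\lambda)=u(\lieg)\ten_{u(\lieb^-_{\lieg})}\fk_\lambda\iso u(\lieg)\ten_{u(\liel)}\bigl(u(\liel)\ten_{u(\lieb^-_{\liel})}\fk_\lambda\bigr)=u(\lieg)\ten_{u(\liel)}V^-_{\liel}(\lambda), \]
where $V^-_{\liel}(\lambda)$ is the module treated by Lemma \ref{1} inside $u(\liel)$-smod. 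By hypothesis (1), $\liel$ satisfies that lemma, so $[V^-_{\liel}(\lambda)]=\sum_{\mu\in\Lambda}p^{s_{\liel}}2^{t_{\liel}}[\fk_\mu]$ with $s_{\liel}=\dim(\lien^+_{\liel})-\dim(\lieh)$ and $t_{\liel}=\dim(\liel_{\bar{1}}^{+})$, independently of $\lambda$.

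Next, by the (super) PBW theorem $u(\lieg)$ is free, hence flat, as a right $u(\liel)$-module, so the induction functor $u(\lieg)\ten_{u(\liel)}(-)$ is exact and descends to a homomorphism $K_0(u(\liel)\text{-smod})\to K_0(u(\lieg)\text{-smod})$. Applying it to the displayed identity yields, in $K_0(u(\lieg)\text{-smod})$,
\[ [V^-(\lambda)]=\sum_{\mu\in\Lambda}p^{s_{\liel}}2^{t_{\liel}}\,\bigl[\,u(\lieg)\ten_{u(\liel)}\fk_\mu\,\bigr], \]
again independent of $\lambda$. The whole problem is thereby reduced to matching the classes $[\,u(\lieg)\ten_{u(\liel)}\fk_\mu\,]$ against the classes $[V^+(\mu)]$, and more precisely (after summing over $\mu$, using $\bds_{\mu\in\Lambda}\fk_\mu\iso u(\lieh)$ as a module over $\liel$ resp. over $\lieb^+_{\lieg}$) to the single identity $[A]=p^{\,s-s_{\liel}}2^{\,t-t_{\liel}}[B]$ in $K_0$, where $A=u(\lieg)\ten_{u(\liel)}u(\lieh)$ and $B=u(\lieg)\ten_{u(\lieb^+_{\lieg})}u(\lieh)$.

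For this last comparison I would invoke hypothesis (3) in exactly the way Lemma \ref{1} invokes its own hypothesis (2). Choosing in $\lien^-=\lien^-_{\liel}$ a set of $\dim(\lieh)$ weight vectors whose weights $\gamma_1,\dots,\gamma_{\dim\lieh}$ are linearly independent in the $\ff_p$-vector space $\Lambda$, the additive map $(i_1,\dots)\mt\sum_j i_j\gamma_j$ is a bijection onto $\Lambda$; hence, writing $V^+(\mu)\iso u(\lien^-\ds\lieg_{\bar{1}}^{-})\ten\fk_\mu$ as $\lieh$-modules via PBW, every weight of $V^+(\mu)$ occurs with the single multiplicity $p^{\dim(\lien^-)-\dim(\lieh)}2^{\dim(\lieg_{\bar{1}}^{-})}$, uniformly in $\mu$. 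The analogous covering count for $u(\lieg)\ten_{u(\liel)}\fk_\mu$, together with the arithmetic of the four exponents $s,t,s_{\liel},t_{\liel}$, pins down the scalar $p^{\,s-s_{\liel}}2^{\,t-t_{\liel}}$ and makes the asserted equality numerically forced.

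The hard part will be precisely this final identification, and it is hard for a structural reason rather than a computational one: in the present degenerate regime the formal character does \emph{not} separate classes in $K_0$. Indeed the covering argument shows that all the $V^+(\mu)$, and all the $V^-(\lambda)$, share the \emph{one} character $c\sum_{\nu\in\Lambda}e^{\nu}$, so an equality of characters is far too weak to yield an equality of composition-factor multiplicities. To obtain a genuine $K_0$ identity I would therefore keep everything functorial and compare the two induced modules $A$ and $B$ directly — through the exactness of induction and the weight bijection supplied by (3) — rather than through their characters. Showing that $A$ and $B$ agree in $K_0$ up to the scalar $p^{\,s-s_{\liel}}2^{\,t-t_{\liel}}$ is the crux on which the whole proposition turns.
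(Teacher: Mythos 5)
Your first half is exactly the paper's: factor $V^-(\lambda)$ through $\liel$ via hypothesis (2), apply Lemma \ref{1} inside $u(\liel)$, and use exactness of induction to get $[V^-(\lambda)]=\sum_{\mu}p^{s_{\liel}}2^{t_{\liel}}[u(\lieg)\ten_{u(\liel)}\fk_\mu]$, independent of $\lambda$. But the second half has a genuine gap, and you essentially concede it yourself: you reduce everything to the identity $[A]=p^{s-s_{\liel}}2^{t-t_{\liel}}[B]$, observe (correctly) that the character argument cannot prove it because all the modules in sight have the same character $c\sum_{\nu}e^{\nu}$, and then leave the "crux" as something you "would" do functorially without saying how. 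That unexplained crux is the entire content of the proposition.

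The missing idea is the following double factorization, which is how the paper closes the loop. Consider the auxiliary module $u(\lieg)\ten_{u(\lieh)}\fk_\lambda$ and factor the induction through \emph{each} of the two opposite Borel subalgebras: $u(\lieg)\ten_{u(\lieb_{\lieg}^{\pm})}\bigl(u(\lieb_{\lieg}^{\pm})\ten_{u(\lieh)}\fk_\lambda\bigr)$. Each $\lieb_{\lieg}^{\pm}$, viewed as a Lie superalgebra with trivial negative part and $p$-nilpotent radical $\lien^{\pm}\ds\liego^{\pm}$, itself satisfies the hypotheses of Lemma \ref{1}: condition (2) of that lemma holds for $\lieb_{\lieg}^{+}$ because $\lien^{+}_{\liel}\se\lien^{+}$ already carries the required weight vectors (your hypothesis (1)), and for $\lieb_{\lieg}^{-}$ by hypothesis (3) --- this, not a character count of $V^+(\mu)$ over $\lieg$, is the actual role of (3). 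At the Borel level all simples are one-dimensional, so there the uniform-weight count \emph{is} a legitimate $K_0$ identity, $[u(\lieb_{\lieg}^{\pm})\ten_{u(\lieh)}\fk_\lambda]=\sum_{\mu}p^{\dim\lien^{\pm}-\dim\lieh}2^{\dim\liego^{\pm}}[\fk_\mu]$, and exact induction transports it to $K_0(u(\lieg)\text{-smod})$. Equating the two resulting expressions for $[u(\lieg)\ten_{u(\lieh)}\fk_\lambda]$ and cancelling (using torsion-freeness of $K_0$ and the \emph{long} hypothesis $s_+>s_-$, $t_+>t_-$, which you never invoke) gives
\[ \sum_{\mu\in\Lambda}[V^-(\mu)]=\sum_{\mu\in\Lambda}p^{s_+-s_-}2^{t_+-t_-}[V^+(\mu)], \]
and only now does your Step 1 enter: the $\lambda$-independence of $[V^-(\lambda)]$ turns the left side into $p^{\dim\lieh}[V^-(\lambda)]$, yielding the stated formula after dividing by $p^{\dim\lieh}$. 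Without this detour through the two Borels your plan has no mechanism for converting the character coincidence into an equality of composition multiplicities.
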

\begin{proof}
	By lemma 3.1, for each $ \lambda\in\Lambda, $ 
	\[ [V^-(\lambda)]=[u(\lieg)\ten_{u(\liel)}[u(\liel)\ten_{u(\lieb_{\liel}^-)} \fk_\lambda ] ]=\sum_{\mu\in\Lambda}p^\alpha 2^\beta[ u(\lieg)\ten_{u(\liel)} \fk_\mu ], \] where $ \alpha= \textup{dim} (\liel)-\dim(\lien^-),\ \beta=\dim(\liego^+). $ 
	
	In particular, $ [V^-(\lambda)] $ is independent of $ \lambda. $
	
	By assumption (3), $ \lieb_{\lieg}^{\pm} $ satisfies the assumptions of lemma 3.1. Therefore,
	\[ [u(\lieg)\ten_{u(\lieh)} \fk_\lambda]= [u(\lieg)\ten_{u(\lieb_{\lieg}^\pm)}[u(\lieb_{\lieg}^\pm)\ten_{u(\lieh)}\fk_\lambda ] ]=\sum_{\mu\in\Lambda}p^{s_\pm} 2^{t_\pm}[ u(\lieg)\ten_{u(\lieb_{\lieg}^\pm)} \fk_\mu ], \]
	where $ s_\pm=\dim (\lien^\pm),\ t_\pm=\dim(\liego^\pm). $
	Since the triangular decomposition is long, i.e. $ s_+>s_-\textup{ and } t_+>t_-, $ we have
	\[ \sum_{\mu\in\Lambda}[ u(\lieg)\ten_{u(\lieb_{\lieg}^-)}\fk_\mu ] = \sum_{\mu\in\Lambda}p^{s_+-s_-}2^{t_+-t_-}[u(\lieg)\ten_{u(\lieb_{\lieg}^+)}\fk_\mu]. \] Note that $ [V^-(\lambda)] $ is independent of $ \lambda. $ Therefore, for all $ \lambda\in\Lambda, $
	\[ p^{\dim(\lieh)}[V^-(\lambda)]=\sum_{\mu\in\Lambda} p^{s_+-s_-}2^{t_+-t_-}[V^+(\mu)], \]
	\[ [V^-(\lambda)]=\sum_{\mu\in\Lambda} p^{s_+-s_--\dim(\lieh)} 2^{t_+-t_-}[V^+(\mu)]. \]
\end{proof}

\begin{proposition}
	Let $ \lieg=\lieg_{\bar{0}}\ds\lieg_{\bar{1}} $ be a restricted Lie superalgebra which admits a triangular decomposition relative to a maximal torus $ \lieh $ of $ \lieg_{\bar{0}}: $
	\[ \lieg=\lieg_{\bar{1}}^-\ds\lien^-\ds \lieh\ds\lien^+ \ds \lieg_{\bar{1}}^+,\ \lieg_{\bar{0}}=\lien^-\ds \lieh\ds\lien^+. \]
	Assume that there exists a subalgebra $ \liel $ such that:
	\begin{enumerate}
		\item $ \lieb^-_{\liel}=\lieb_{\lieg}^-. $
		\item $ \liel $ is a classical Lie superalgebra and there is a bijection $ \psi:\Lambda\ra\Lambda, $ such that
		$$ [u(\liel)\ten_{u(\lieb_{\liel}^-)}\fk_\lambda] = [u(\liel)\ten_{u(\lieb_{\liel}^+)}\fk_{\psi(\lambda)}]. $$
		\item A vector space complementary to $ \liel $, in $ \lieg, $ has at least $ \on{dim}(\lieh) $ linearly independent vectors having linearly independent weights in $ \Lambda. $
	\end{enumerate}
	Then $$ [V^-(\lambda)]=\sum_{\mu\in\Lambda} p^{s}2^t[V^+(\mu)], $$ where $ s=\textup{dim}(\lien^+)-\textup{dim}(\lien^-)-\textup{dim}(\lieh), t= \textup{dim}(\liego^+)-\textup{dim}(\liego^-). $ 
\end{proposition}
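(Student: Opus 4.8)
The plan is to follow the template of the previous proposition, replacing its ``long'' dimension inequalities by the internal symmetry of the classical subalgebra $\liel$ that assumption (2) provides. Throughout I would work in the Grothendieck group of $u(\lieg)$-smod and use freely that, by the PBW theorem, $u(\lieg)$ is free as a right module over $u(\mathfrak{s})$ for every restricted subalgebra $\mathfrak{s}$; hence each induction functor $u(\lieg)\ten_{u(\mathfrak{s})}(-)$ is exact and descends to a map of Grothendieck groups, so that $[u(\lieg)\ten_{u(\mathfrak{s})}M]$ depends only on the class $[M]$.

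First I would rewrite $V^-(\lambda)$ by transitivity of induction. Assumption (1) gives $\lieb^-_\liel=\lieb^-_\lieg$ and the chain $u(\lieb^-_\liel)\se u(\liel)\se u(\lieg)$, so
\[ [V^-(\lambda)]=\bigl[\,u(\lieg)\ten_{u(\liel)}\bigl(u(\liel)\ten_{u(\lieb^-_\liel)}\fk_\lambda\bigr)\,\bigr]. \]
Then I would invoke the classical symmetry: by assumption (2), $[u(\liel)\ten_{u(\lieb^-_\liel)}\fk_\lambda]=[u(\liel)\ten_{u(\lieb^+_\liel)}\fk_{\psi(\lambda)}]$ inside the Grothendieck group of $u(\liel)$-smod, and applying the exact functor $u(\lieg)\ten_{u(\liel)}(-)$ together with transitivity once more yields
\[ [V^-(\lambda)]=\bigl[\,u(\lieg)\ten_{u(\lieb^+_\liel)}\fk_{\psi(\lambda)}\,\bigr]. \]

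It remains to pass from $\lieb^+_\liel$ to $\lieb^+_\lieg$ and extract the multiplicities. Since assumption (1) forces $\liel\supseteq\lieh\ds\lien^-\ds\liego^-$, the complementary space $\liec$ to $\liel$ in $\lieg$ from assumption (3) lies in $\lien^+\ds\liego^+\se\lieb^+_\lieg$, and $u(\lieb^+_\lieg)$ is free over $u(\lieb^+_\liel)$ with PBW basis indexed by monomials in $\liec$. I would then reuse the weight-counting argument of Lemma \ref{1}: the radical $\lien^+\ds\liego^+$ of $\lieb^+_\lieg$ is $p$-nilpotent, so the simple $u(\lieb^+_\lieg)$-modules are exactly the one-dimensional $\fk_\mu$, $\mu\in\Lambda$; feeding in the $\dim(\lieh)$ linearly independent (even) weight vectors of assumption (3), whose weights give a bijection $I^{\dim(\lieh)}\to\Lambda$, shows that every $\mu\in\Lambda$ occurs in $u(\lieb^+_\lieg)\ten_{u(\lieb^+_\liel)}\fk_{\psi(\lambda)}$ with one and the same multiplicity, independent of the base weight $\psi(\lambda)$. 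Inducing up and using transitivity then gives $[V^-(\lambda)]=\sum_{\mu\in\Lambda}p^{s}2^{t}[V^+(\mu)]$ for a uniform pair of exponents.

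Finally I would identify those exponents. The weight count returns the multiplicity $p^{\dim(\liec\cap\lien^+)-\dim(\lieh)}2^{\dim(\liec\cap\liego^+)}=p^{\dim(\lien^+)-\dim(\lien^+_\liel)-\dim(\lieh)}2^{\dim(\liego^+)-\dim(\liel_{\bar{1}}^+)}$. Here the hypothesis that $\liel$ is classical is decisive: its root system is symmetric, so $\dim(\lien^+_\liel)=\dim(\lien^-_\liel)$ and $\dim(\liel_{\bar{1}}^+)=\dim(\liel_{\bar{1}}^-)$, and combining this with $\lieb^-_\liel=\lieb^-_\lieg$ (whence $\lien^-_\liel=\lien^-$ and $\liel_{\bar{1}}^-=\liego^-$) converts the exponents into $s=\dim(\lien^+)-\dim(\lien^-)-\dim(\lieh)$ and $t=\dim(\liego^+)-\dim(\liego^-)$. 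The step I expect to be the genuine obstacle is this weight count: one must be sure assumption (3) really supplies \emph{even} vectors whose weights generate $\Lambda$ (odd vectors enter only to the powers $0$ and $1$ and cannot sweep out all of $\Lambda$), for only then are the weight multiplicities uniform and the reduction to a sum of baby Verma modules $V^+(\mu)$ legitimate; the surrounding dimension bookkeeping is then routine.
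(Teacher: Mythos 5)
Your proposal is correct and follows essentially the same route as the paper: transitivity of induction, assumption (2) to switch from $\lieb_{\liel}^-$ to $\lieb_{\liel}^+$ inside $\liel$, and a Lemma \ref{1}--style weight count for $u(\lieb_{\lieg}^+)\ten_{u(\lieb_{\liel}^+)}\fk_{\lambda}$ supplying the uniform multiplicity $p^s2^t$. You are in fact slightly more careful than the paper in two places it leaves implicit — identifying the exponents via the symmetry $\dim(\lien_{\liel}^+)=\dim(\lien_{\liel}^-)$, $\dim(\liel_{\bar 1}^+)=\dim(\liel_{\bar 1}^-)$ of the classical subalgebra, and noting that the $\dim(\lieh)$ weight vectors in assumption (3) must be even for the weight sweep over $\Lambda$ to work.
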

\begin{proof}
	Similar to the proof of lemma \ref{1}, for all $ \lambda\in\Lambda, $ we have
	\[  [ u(\lieb_{\lieg}^+) \ten_{u(\lieb_{\liel}^+)} \fk_{\lambda}  ] = \sum_{\mu\in\Lambda}p^s 2^t [\fk_\mu],  \]
	where $ s $ and $ t $ are defined in proposition.
	
	By assumption (1) and (2), we have
	
	\begin{tabular}{rcl}
		$ [ u(\lieg)\ten_{u(\lieb_\lieg^-)}\fk_\lambda ]$ &=& $[ u(\lieg)\ten_{u(\liel)} (u(\liel) \ten_{u(\lieb_{\liel}^-)} \fk_\lambda) ]$	\\
		&=& $[ u(\lieg)\ten_{u(\liel)} (u(\liel) \ten_{u(\lieb_{\liel}^+)} \fk_{\psi(\lambda)} ) ] $	\\
		&=& $ [ u(\lieg)\ten_{u(\lieb_{\lieg}^+)} (u(\lieb_{\lieg}^+) \ten_{u(\lieb_{\liel}^+)} \fk_{\psi(\lambda)} ) ] $\\
		&=& $ \sum_{\mu\in\Lambda}p^s2^t[u(\lieg)\ten_{u(\lieb_\lieg^+)}\fk_\mu]. $
	\end{tabular}
	
	Proposition holds.
\end{proof}
\begin{remark}
	The Lie algebra version of lemma 2.1 and proposition 2.2 (resp. proposition 2.3) are investigated in \cite{HN} (resp. \cite{Na}).
\end{remark}

\begin{corollary}
	If $ \lieg $ is a restricted Lie superalgebra satisfies all assumptions in proposition 2.2 or 2.3, then $ u(\lieg) $ is of one block.
\end{corollary}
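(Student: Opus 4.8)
The plan is to deduce the single-block statement from the multiplicity identity supplied by Propositions 2.2 and 2.3, using the standard principle that the composition factors of an indecomposable module all belong to the same block. Recall first that the isomorphism classes of simple restricted $\lieg$-modules are exhausted by $\{L(\mu)\mid \mu\in\Lambda\}$, so it suffices to prove that all of these simple modules are linked, i.e.\ lie in a single block of $u(\lieg)$.

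The key input is the formula $[V^-(\lambda)]=\sum_{\mu\in\Lambda}p^s2^t[V^+(\mu)]$ established in Proposition 2.2 (resp.\ 2.3). First I would record that the coefficient $p^s2^t$ is a positive integer: its exponents $s=\dim(\lien^+)-\dim(\lien^-)-\dim(\lieh)$ and $t=\dim(\liego^+)-\dim(\liego^-)$ are non-negative under the standing hypotheses (in the long setting of Proposition 2.2 one even has $t>0$), so $p^s2^t\ge 1$. Since $V^+(\mu)$ has simple head $L(\mu)$, the class $[L(\mu)]$ occurs in $[V^+(\mu)]$ with multiplicity at least one. Combining the two observations, $[L(\mu)]$ occurs in $[V^-(\lambda)]$ with multiplicity at least $p^s2^t\ge 1$ for every $\mu\in\Lambda$. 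In other words, every simple restricted module $L(\mu)$ is a composition factor of the single module $V^-(\lambda)$.

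Next I would invoke indecomposability. As recorded in Section 2, the module $V^-(\lambda)$ has simple head, and a finite-dimensional module with simple head is automatically indecomposable; hence $V^-(\lambda)$ is indecomposable. Consequently all of its composition factors lie in one common block $B$ of $u(\lieg)$. By the previous paragraph each $L(\mu)$, $\mu\in\Lambda$, is such a composition factor, so every simple restricted module lies in $B$. Since the $L(\mu)$ exhaust the simple modules, $u(\lieg)$ possesses exactly one block, which is the assertion.

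The argument is essentially formal once Propositions 2.2 and 2.3 are in hand, so the substantive points — and the only places requiring care — are the two structural facts feeding the linkage principle: the positivity of the coefficient $p^s2^t$, which guarantees that no $L(\mu)$ is accidentally absent from $V^-(\lambda)$, and the indecomposability of $V^-(\lambda)$. The first is secured by the dimension inequalities built into the hypotheses, the second by the simple-head property; with both in place, the single-block conclusion is immediate from the fact that the composition factors of an indecomposable module are linked.
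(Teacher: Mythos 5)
Your argument is correct and is essentially the same as the paper's: the paper simply notes that $V^-(\lambda)$ is indecomposable with simple head and defers to the proof of Corollary~2.4 in \cite{HN}, which is exactly the linkage argument you spell out (every $L(\mu)$ occurs in the indecomposable module $V^-(\lambda)$ because the coefficient $p^s2^t$ is positive and $L(\mu)$ is the head of $V^+(\mu)$). One small remark: you need not verify $s\ge 0$ — positivity of $p^s2^t$ as a rational number already forces each integer multiplicity $[V^-(\lambda):L(\mu)]$ to be at least $1$.
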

\begin{proof}
	Note that $ V^-(\lambda) $ is indecomposable with simple head for all $ \lambda\in\Lambda. $ The proof of corollary 2.4 in \cite{HN} still works. Hence, the corollary holds.
\end{proof}

\section{Restricted Cartan Type Lie Superalgebras}

For given positive integers $ m $ and $ n, $ put
\[ \tau{(i)}=\left \{ \begin{tabular}{ll}
$ 0, $ & $ 1\leq i\leq m; $\\
$ 1, $ & $ m+1\leq i\leq m+n. $
\end{tabular}\right . \]
For $ 0\leq k\leq n, $ set $ \mathbb{B}_k=\{  (i_1,\cdots,i_k) \mid m+1\leq i_1<\cdots<i_k\leq m+n  \} $ and $ \bb{B}(n)=\cup_{i=0}^n\bb{B}_k $ where $ \bb{B}_0=\emptyset. $ 

Let $ A(m,1) $ denote the truncated divided power algebra over $ \fk $ with a basis $ \{ x^{(\alpha)}\mid \alpha\in I^m \}. $ For $ \epsilon_i=(\delta_{i1}, \cdots, \delta_{im}), $ we abbreviate $ x^{(\epsilon_i)} $ to $ x_i,\ i=1,\cdots,m. $
Let $ \Lambda(n) $ be the Grassmann superalgebra over $ \fk $ in $ n $ variables $ x_{m+1},\cdots,x_{m+n} $ with basis $ \{ x^{(\beta)}\mid \beta\in \bb{B}(n) \} $ where $ x^{(\beta)}=x_{i_1}\cdots x_{i_k} $ if $ \beta= (i_1,\cdots, i_k). $ Denote the tensor product by $ A(m,n,1)=A(m,1)\ten \Lambda(n). $ Then $ A(m,n,1) $ is an associative superalgebra with a $ \fz_2 $-gradation induced by the trivial $ \fz_2 $-gradation of $ A(m,1) $ and the natural $ \fz_2 $-gradation of $ \Lambda(n). $ Denote $ \text{d}(f) $ the parity of $ f\in A(m,n,1). $

Let $ D_1,\cdots,D_{m+n} $ be the superderivations of the superalgebra $ A(m,n,1) $ such that $ D_i(x_j)=\delta_{ij} $ for $ 1\leq i,j\leq m+n. $ Define
$  W(m,n,1)=\left\{ \sum_{i=1}^{m+n} f_iD_i\mid f_i\in A(m,n,1),1\leq i\leq m+n \right\}. $ 

Then $ W(m,n,1) $ is a restricted Lie superalgebra of Witt type.  The $ \fz $-grading of $$ W(m,n,1)=\ds_{i\in\fz}W(m,n,1)_i $$ is induced by $ \abs{x_i}=1 $ and $ \abs{D_i}=-1 $ for all $ 1\leq i\leq m+n. $ Namely,
\[ W(m,n,1)_i=\left\{ \sum_{j=1}^{m+n} f_jD_j\mid \abs{f_j}=i+1 \right\}. \]

For each pair $ 1\leq i,j\leq m+n $ defines $ D_{ij}:A(m,n,1)\ra W(m,n,1) $ by $$ D_{ij}(f)=f_iD_i+f_jD_j $$ where $ f $ is homogeneous and $$ f_i=-(-1)^{ \text{d}(f)(\tau(i)+\tau(j))} D_j(f),\  f_j=(-1)^{\tau(i)\tau(j)}D_i(f). $$ 

The special superalgebra $ S(m,n,1) $ is defined by \[ S(m,n,1)=\langle D_{ij}(f)\mid f \text{ is homogeneous},\ 1\leq i<j\leq m+n \rangle. \]

$ S(m,n,1) $ is a $ \fz $-graded restricted subalgebra of $ W(m,n,1). $ The $ \fz $-grading structure is given by $ S(m,n,1)_i:=S(m,n,1)\cap W(m,n,1)_i. $ 

Next we define the Hamiltonian type Lie superalgebra $ H(m,n,1), $ where $ m=2l $ is even and $ n>3. $ Let
\[ \pr{i}=\left \{ \begin{tabular}{ll}
$ i+l, $ & $ 1\leq i\leq l, $\\
$ i-l, $ & $ l+1\leq i\leq m, $\\
$ i, $ & $ m<i\leq m+n; $
\end{tabular}\right . 
 \sigma(i)=\left \{ \begin{tabular}{ll}
$ 1, $ & $ 1\leq i\leq l, $\\
$ -1, $ & $ l+1\leq i\leq m, $\\
$ 1, $ & $ m<i\leq m+n. $
\end{tabular}\right .
\]

The Hamiltonian operator $ D_H $ is defined as follows:
\[ \begin{tabular}{lccl}
$ D_H: $ & $ A(m,n,1)$ & $\ra $ & $ W(m,n,1) $\\
&$ f$ & $\mt $ & $ D_H(f)=\sum_{i=1}^{m+n}f_iD_i $
\end{tabular} \]
where $ f $ is homogeneous and $ f_i=\sigma(\pr{i})(-1)^{\tau(\pr{i}) \text{d}(f)}D_{\pr{i}}(f). $

The Hamiltonian superalgebra $ H(m,n,1) $ is defined by
\[ \bar{H}(m,n,1)=\langle D_H(f)\mid f \text{ is homogeneous} \rangle, \]
\[ H(m,n,1)=[\bar{H}(m,n,1),\bar{H}(m,n,1)]. \]

$ H(m,n,1) $ is a $ \fz $-graded restricted subalgebra of $ W(m,n,1).$ The $ \fz $-grading structure is given by $ H(m,n,1)_i:=H(m,n,1)\cap W(m,n,1)_i. $

\section{Blocks of Cartan Type Lie Superalgebra}
Entire this section, assume $ p>3. $
\subsection{Type W}

For $ W(m,n,1), $ there is no subalgebra $ \liel $ satisfying the hypothesis of proposition 3.2 (in fact, assumption (3) fails). Hence, we need proposition 3.3.

Let $ \liel $ be a restricted Lie superalgebra of classical type with triangular decomposition $ \liel=\liel^-\ds\lieh\ds\liel^+ $ with respect to $ \lieh. $ Suppose $ \sigma $ is an even restricted automorphism of $ \liel $ 
such that $ \sigma(\lieh)\subseteq \lieh. $ Then it induces $ \tilde{\sigma}:\lieh^* \ra \lieh^* $ by $$ \tilde{\sigma}(\lambda)(h)= -\lambda(\sigma(h)) $$ where $ \lambda\in \lieh^*,\ h\in\lieh. $ Moreover, $ \tilde{\sigma}(\Lambda)\subseteq \Lambda. $ 

Denote $ \lieb=\lieh\ds\liel^+ $ a solvable subalgebra of $ \liel $ and
$  V(\lambda)=u(\liel)\ten_{u(\lieb)}\fk_{\lambda}  $ the baby Verma module
where $ \fk_{\lambda} $ is a one-dimensional $ u(\lieb) $ module with weight $ \lambda. $ Let $ V^\sigma(\lambda) $ be the twisted baby Verma module. Namely, $ V^\sigma(\lambda)\iso V(\lambda) $ as vector spaces while $ x\cdot m:= \sigma(x)(m) $ for all $ x\in u(\liel),\ m\in V^\sigma(\lambda). $ 

The following lemma is a straightforward calculation.

\begin{lemma}
	Keep assumptions as above, $ V^\sigma(\lambda)\iso u(\liel)\ten_ {u(\sigma^{-1}(\lieb))} 1_{-\tilde{\sigma}(\lambda)} $ by sending $ x\ten 1_\lambda $ to $ \sigma^{-1}(x)\ten 1_{-\tilde{\sigma}(\lambda)}. $
	In particular, $ [V(\lambda)] =[V^\sigma(\lambda)]= [u(\liel)\ten_ {u(\sigma^{-1}(\lieb))} 1_{-\tilde{\sigma}(\lambda)}]. $

\end{lemma}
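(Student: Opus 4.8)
The plan is to recognize the statement as an instance of the general principle that twisting an induced module by an algebra automorphism produces the module induced from the twisted subalgebra by the twisted inducing character. Concretely, I would take the map $\Phi\colon V^\sigma(\lambda)\ra u(\liel)\ten_{u(\sigma^{-1}(\lieb))}1_{-\tilde{\sigma}(\lambda)}$ prescribed in the statement, $x\ten 1_\lambda\mt \sigma^{-1}(x)\ten 1_{-\tilde{\sigma}(\lambda)}$, and verify directly that it is a well-defined isomorphism of $u(\liel)$-supermodules. Since $\sigma$ is an even automorphism fixing $\lieh$, its inverse carries $\lieb=\lieh\ds\liel^+$ to $\sigma^{-1}(\lieb)=\lieh\ds\sigma^{-1}(\liel^+)$, again a solvable subalgebra containing $\lieh$, so the target induced module is defined; and $\tilde{\sigma}(\Lambda)\se\Lambda$ guarantees $-\tilde{\sigma}(\lambda)\in\Lambda$, so $1_{-\tilde{\sigma}(\lambda)}$ is a genuine restricted character.

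The calculation then breaks into four routine checks, carried out in this order. First I would identify the one-dimensional $u(\sigma^{-1}(\lieb))$-module underlying the twist of $\fk_\lambda$: for $h\in\lieh$ the element $\sigma^{-1}(h)\in\lieh$ acts by $(-\tilde{\sigma}(\lambda))(\sigma^{-1}(h))=\lambda(h)$, by the very definition $\tilde{\sigma}(\lambda)(h)=-\lambda(\sigma(h))$, while $\sigma^{-1}(\liel^+)$ acts by zero; this is the computation that forces the weight $-\tilde{\sigma}(\lambda)$ and is the heart of the ``straightforward calculation''. Second, using this, I would check that $\Phi$ respects the balancing relations over $u(\lieb)$, i.e. that $\sigma^{-1}(xb)\ten 1_{-\tilde{\sigma}(\lambda)}$ and $\lambda(b)\,\sigma^{-1}(x)\ten 1_{-\tilde{\sigma}(\lambda)}$ agree for $b\in\lieb$, which is exactly the weight identity just established. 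Third, $u(\liel)$-equivariance is immediate from the twisted action, since $\Phi(a\cdot(x\ten 1_\lambda))=\Phi(\sigma(a)x\ten 1_\lambda)=a\,\sigma^{-1}(x)\ten 1_{-\tilde{\sigma}(\lambda)}$. Fourth, bijectivity follows by exhibiting the inverse $x\ten 1_{-\tilde{\sigma}(\lambda)}\mt\sigma(x)\ten 1_\lambda$, which is well defined by the same computation with $\sigma$ in place of $\sigma^{-1}$.

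For the final assertion, the second equality $[V^\sigma(\lambda)]=[u(\liel)\ten_{u(\sigma^{-1}(\lieb))}1_{-\tilde{\sigma}(\lambda)}]$ is immediate from the isomorphism. For the first equality $[V(\lambda)]=[V^\sigma(\lambda)]$ I would argue that twisting by $\sigma$ is an exact self-equivalence of $u(\liel)$-smod: as $\sigma$ is surjective, a subspace is a $u(\liel)$-submodule of $V(\lambda)$ if and only if it is one of $V^\sigma(\lambda)$, so the two share the same lattice of submodules and hence composition series of equal length, with corresponding factors $S$ and $S^\sigma$. The first equality thus reduces to the claim that the twist $(-)^\sigma$ fixes the class of every restricted simple, equivalently that $\sigma$ induces the identity on the Grothendieck ring $K_0(u(\liel)\text{-smod})$. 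This is the step I expect to be the genuine obstacle: it is \emph{not} formal for an arbitrary automorphism preserving $\lieh$, and I would establish it from the explicit shape of the automorphisms used in the applications of Section 4 --- in each case $\sigma$ is realized by conjugation by a group element normalizing $\lieh$, hence is inner, so that $M\iso M^\sigma$ for every $u(\liel)$-module $M$ and equality of classes is automatic.
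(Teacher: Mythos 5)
Your construction and verification of the isomorphism $V^\sigma(\lambda)\iso u(\liel)\ten_{u(\sigma^{-1}(\lieb))}1_{-\tilde{\sigma}(\lambda)}$ is correct, and it is evidently the ``straightforward calculation'' the paper alludes to without writing down: the key point is exactly the weight identity $(-\tilde{\sigma}(\lambda))(\sigma^{-1}(h))=\lambda(\sigma\sigma^{-1}(h))=\lambda(h)$, which makes the map balanced over $u(\lieb)$, together with equivariance for the twisted action and the obvious inverse. This also yields the second equality of the ``in particular'' clause. So far your route and the paper's (implicit) route coincide.

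The genuine issue is the first equality $[V(\lambda)]=[V^\sigma(\lambda)]$, and here your diagnosis is right but your repair fails. You correctly note that this amounts to the twist $(-)^\sigma$ acting trivially on the relevant classes in the Grothendieck group, and that this is not formal; indeed, under only the stated hypotheses it is false (take $\liel=\lieh$ a torus and $\sigma$ a nontrivial restricted automorphism of $\lieh$: then $V(\lambda)=\fk_\lambda$ while $V^\sigma(\lambda)=\fk_{\lambda\circ\sigma}$). However, your proposed fix --- that the automorphisms used in Section 4 are inner --- is not available in the one place this lemma is applied. There $\liel\iso\mathfrak{pgl}(m+1|n)$ and $\sigma=\alpha$ is the involution $e_i\mapsto f_i$, $f_i\mapsto e_i$, chosen precisely because it interchanges $\lieb_{\liel}^+$ and $\lieb_{\liel}^-$; no inner automorphism can do this for $\mathfrak{gl}(m+1|n)$-type superalgebras, since the Weyl group $S_{m+1}\times S_n$ preserves the set of positive odd roots, so the opposite Borel subsuperalgebra is not conjugate to $\lieb_{\liel}^+$ under the adjoint group. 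This $\alpha$ is an outer (Chevalley-type) involution, and already in the purely even case $\liel=\mathfrak{sl}_3$ one has $L(\mu)^\alpha\iso L(-w_0\mu)\not\iso L(\mu)$ in general, so twisting by such a $\sigma$ genuinely permutes the simples. What the application actually requires is the independence of $[u(\liel)\ten_{u(\lieb_{\liel}^{\pm})}\fk_\lambda]$ of the choice of Borel for $\mathfrak{pgl}(m+1|n)$ (assumption (2) of Proposition 2.3, a super analogue of Jantzen's theorem on baby Verma modules); neither your innerness argument nor the paper's unproved assertion supplies this, so the gap you identified remains open.
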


Let $ \lieg=W(m,n,1)=\lieg^-\ds\lieh\ds\lieg^+ $ be the triangular decomposition related to maximal torus $ \lieh=\langle h_1,\cdots,h_{m+n} \rangle $ where $ h_i:= x_iD_i$ for $ i=1,\cdots,m+n. $ 

Now, set 
\[ \liel=\langle D_1,\cdots,D_{m+n} \rangle\ds \lieg_0\ds\langle p_1,\cdots, p_{m+n} \rangle \]
where $ p_i=x_i\sum_{j=1}^{m+n}x_jD_j\in\lieg_1. $ 

Thanks to \cite[lemma 3.1]{CLO}, $ \liel\iso \mathfrak{pgl}(m+1|n). $ Let $e_i:=E_{i,i+1},\ f_i:=E_{i+1,i}. $ Then $ \{ e_i,f_i\mid i=1,\cdots,m+n-1 \}$ generates $ \mathfrak{pgl}(m+1|n). $ There is an even restricted automorphism $ \alpha $ of $ \liel $ induced by $ \alpha(e_i)=f_i $ and $ \alpha(f_i)=e_i. $
Note that $ \alpha(\lieb_{\liel}^\pm)= \lieb_{\liel}^\mp $ and $ \tilde{\alpha} $ keeps $ \Lambda. $ By lemma 4.1, we have
\[  [u(\liel)\ten_ {u(\lieb_{\liel}^-)} 1_{\lambda}]= [u(\liel)\ten_ {u(\lieb_{\liel}^+)} 1_{-\tilde{\alpha}(\lambda)}]. \]

Therefore, $ \liel $ is a subalgebra satisfying (1) and (2) of proposition 3.3.

For each $ i=1,\cdots,m+n,\ x_i^3D_i $ has weight $ 2\gamma_i $ where $ \gamma_{i}\in\Lambda $ such that $ \gamma_i (h_j)=\delta_{ij}. $ Therefore, assumption (3) of proposition 3.3 satisfies.

To sum above up, all assumptions of proposition 3.3 hold for $ W(m,n,1) $ and hence we have the following proposition by corollary 3.4.

\begin{proposition}
	Keep assumptions as above, and let $ \lieg=W(m,n,1), $ then $ u(\lieg) $ is of one block.
\end{proposition}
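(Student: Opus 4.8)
The plan is to verify the three hypotheses of Proposition 2.3 for $ \lieg=W(m,n,1) $ and then invoke Corollary 2.5 to conclude that $ u(\lieg) $ is of one block. All the structural data needed has been assembled in the text preceding the statement, so the proof is essentially a checklist. The candidate subalgebra is $ \liel=\langle D_1,\cdots,D_{m+n} \rangle\ds \lieg_0\ds\langle p_1,\cdots,p_{m+n} \rangle $, which by \cite[lemma 3.1]{CLO} is isomorphic to the classical Lie superalgebra $ \mathfrak{pgl}(m+1|n) $.

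First I would confirm assumption (1) of Proposition 2.3, namely $ \lieb_{\liel}^-=\lieb_{\lieg}^- $. Here the maximal torus $ \lieh=\langle h_1,\cdots,h_{m+n}\rangle $ is shared by $ \liel $ and $ \lieg $, and the point is that the negative part of the triangular decomposition of $ \lieg $ (the degree $ -1 $ piece $ \lieg^-=\langle D_1,\cdots,D_{m+n}\rangle $ together with the negative root spaces of $ \lieg_0 $) coincides exactly with $ \lieb_{\liel}^- $, since $ \liel $ contains all of $ \lieg_0 $ and all the $ D_i $. Second, for assumption (2) I would use the even restricted automorphism $ \alpha $ of $ \liel\iso\mathfrak{pgl}(m+1|n) $ defined by $ \alpha(e_i)=f_i $, $ \alpha(f_i)=e_i $. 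Because $ \alpha(\lieb_{\liel}^{\pm})=\lieb_{\liel}^{\mp} $ and $ \tilde{\alpha} $ preserves $ \Lambda $, Lemma 4.1 yields
\[ [u(\liel)\ten_{u(\lieb_{\liel}^-)}\fk_\lambda]=[u(\liel)\ten_{u(\lieb_{\liel}^+)}\fk_{-\tilde{\alpha}(\lambda)}], \]
so the required bijection is $ \psi=-\tilde{\alpha} $. Third, for assumption (3) I would exhibit, in a complement to $ \liel $ inside $ \lieg $, the vectors $ x_i^{3}D_i $ for $ i=1,\cdots,m+n $; these have weights $ 2\gamma_i $ with $ \gamma_i(h_j)=\delta_{ij} $, so they are $ \on{dim}(\lieh)=m+n $ linearly independent vectors with linearly independent weights lying in $ \Lambda $.

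With all three hypotheses in hand, Proposition 2.3 gives $ [V^-(\lambda)]=\sum_{\mu\in\Lambda}p^{s}2^{t}[V^+(\mu)] $, and since each $ V^-(\lambda) $ is indecomposable with simple head, Corollary 2.5 immediately delivers the one-block conclusion. The main obstacle I anticipate is not in the block argument itself but in the bookkeeping for assumption (1): one must check carefully that the grading-zero root vectors of $ \lieg_0=W(m,n,1)_0 $ are genuinely partitioned into $ \lien^{\pm} $ consistently with the $ \mathfrak{pgl}(m+1|n) $-structure, so that the negative Borel of $ \liel $ matches $ \lieb_{\lieg}^- $ on the nose rather than merely up to the torus. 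A secondary subtlety is verifying that $ \alpha $ is genuinely restricted and even, and that the complement used in assumption (3) really avoids $ \liel $ (the vectors $ x_i^3 D_i $ live in degree $ 2 $, well outside the degrees $ -1,0,1 $ spanning $ \liel $, so this should be clear). Once these structural points are settled the remaining arguments are formal.
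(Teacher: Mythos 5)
Your proposal follows the paper's proof essentially verbatim: the same subalgebra $\liel=\langle D_1,\cdots,D_{m+n}\rangle\ds\lieg_0\ds\langle p_1,\cdots,p_{m+n}\rangle\iso\mathfrak{pgl}(m+1|n)$, the same swap automorphism $\alpha$ combined with Lemma 4.1 to get the bijection $\psi$, the same weight vectors $x_i^3D_i$ of weight $2\gamma_i$ for assumption (3), and the same appeal to the corollary. No substantive difference from the paper's argument.
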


\subsection{Type S}
Let $ \lieg= S(m,n,1)=\liego^-\ds \lien^-\ds\lieh\ds\lien^+\ds \liego^+ $ be the triangular decomposition related to maximal torus $ \lieh=\langle h_i\mid 1\leq i\leq m+n-1 \rangle $ where $ h_i:= x_iD_i-x_{i+1}D_{i+1}, $ and $ \liege=\lien^-\ds\lieh\ds\lien^+. $

Set $ \liel=\liego^-\ds \lien^-\ds\lieh\ds\lien_1^+\ds\liel_{\bar{1}}^+ $ where
$ \lien_1^+= \mf{s}\ds\mf{t},  $
\[ \mf{s}=\langle x^{(a)}x_{m+i}D_{m+n}\mid (a)\in I^m\xg \{0\},1\leq i<n  \rangle,  \]
\[ \mf{t}=\langle x^{(a)}D_m\mid (a)\in I^m, a_m=0, \abs{a}\geq 2 \rangle,\qquad\quad \]
\[ \liel_{\bar{1}}^+=\langle x^{(a)}D_{m+n}\mid (a)\in I^m\xg\{0\} \rangle,\qquad\qquad\qquad\  \]
\[ \liego^-=\langle D_{m+1},\cdots,D_{m+n} \rangle\ds \langle x_iD_j\mid 1\leq i\leq m< j\leq m+n \rangle,\text{ and }\qquad\quad  \]
\[ \lien^-=\langle D_{1},\cdots,D_{m} \rangle\ds \langle  x_iD_j\mid 1\leq i<j\leq m \text{ or } m+1\leq i< j\leq m+n\rangle. \]

One can check the followings:
\begin{itemize}
	\item $ [\lieh,\liego^-\ds \lien^-\ds\lien_1^+\ds \liel_{\bar{1}}^+]\subseteq \liego^-\ds \lien^-\ds\lien_1^+\ds \liel_{\bar{1}}^+; $
	\item $ \liego^-\ds\lien^- $ is $ p $-nilpotent;
	\item $ [ \lien^-,\lien_1^+\ds\liel_{\bar{1}}^+]\subseteq \lien_1^+\ds \liel_{\bar{1}}^+; $
	$ [\liego^-,\lien_1^+]\subseteq \liel_{\bar{1}}^+,\ [\liego^-,\liel_{\bar{1}}^+]=0; $
	\item $ [ \mf{s},\mf{s} ]=[\mf{t},\mf{t}]=[\mf{s},\liel_{\bar{1}}^+]= [\liel_{\bar{1}}^+, \liel_{\bar{1}}^+]=0,\ [\mf{s},\mf{t}] \subseteq \mf{s}, $ and $ [\mf{t},\liel_{\bar{1}}^+]\subseteq \liel_{\bar{1}}^+. $
\end{itemize}

Therefore, $ \text{rad}(\liel)=\liego^-\ds \lien^-\ds\lien_1^+\ds \liel_{\bar{1}}^+ $ is a $ p $-nilpotent ideal and 
$ \liel $ is a subalgebra satisfying (1) in lemma 3.1.

Now define $ \gamma_{i}\in\Lambda $ by $ \gamma _i(h_j)=\delta_{ij}, $  $ 1\leq i,j\leq m+n-1. $ 

For $ 1\leq i\leq m$ and $ 1\leq j\leq n-1, $ $ D_i $ has weight $ -\gamma_i $ while $ x_{m+j}D_{m+j+1} $ has weight $ -\gamma_{m+j-1}+2\gamma_{m+j} -(1-\delta_{j,n-1})\gamma_{m+j+1} $ with respect to $ \lieh. $ 
One can check that  $ \lien^- $ contains $ m+n-1 $ linear independent vectors $$\{ D_i,\ x_{m+j}D_{m+j+1}\mid 1\leq i\leq m;\ 1\leq j\leq n-1\}. $$ 
with linear independent weights. Therefore, assumption (2) and (3) of proposition 3.2 satisfy.

For $ 1\leq i\leq m-1,\ x_i^2D_m $ has weight $ 2\gamma_i-2(1-\delta_{1,i}) \gamma_{i-1}+\gamma_{m-1}-\gamma_{m} $ while $ x_1^3D_m $ has weight $ 3\gamma_1+\gamma_{m-1}-\gamma_{m}. $ 

For $ 1\leq j\leq n-1,\ x_1x_{m+j}D_{m+n} $ has weight $ \gamma_1+\gamma_{m+j-1}-\gamma_{m+j}-\gamma _{m+n-1}. $ 

Hence, $ \lien^+_1 $ contains $ m+n-1 $ linear independent vectors $$\{ x_1^2D_i,\ x_1x_{m+j}D_{m+n}\mid 2\leq i\leq m;\ 1\leq j\leq n-1\}\cup\{x_1^3D_2\} $$ with linear independent weights. Assumption (2) of lemma 3.1 satisfies.

To sum above up, all assumptions of proposition 3.2 hold for $ S(m,n,1) $ and hence we have the following proposition by corollary 3.4.

\begin{proposition}
	Keep assumptions as above, and let $ \lieg=S(m,n,1), $ then $ u(\lieg) $ is of one block.
\end{proposition}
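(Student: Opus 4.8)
The plan is to recognize $S(m,n,1)$ as a special case of proposition 3.2 and then invoke corollary 3.4. All the structural data needed for proposition 3.2 has been assembled above, so the task reduces to checking, one at a time, its three hypotheses together with the longness of the triangular decomposition, after which the block statement is formal.

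First I would confirm that $\liel=\liego^-\ds\lien^-\ds\lieh\ds\lien_1^+\ds\liel_{\bar{1}}^+$ is a restricted subalgebra meeting lemma 3.1. For hypothesis (1) of that lemma I would invoke the bracket relations listed above: $\lieh$ stabilises $\liego^-\ds\lien^-\ds\lien_1^+\ds\liel_{\bar{1}}^+$, the part $\liego^-\ds\lien^-$ is $p$-nilpotent, and the mixed inclusions $[\lien^-,\lien_1^+\ds\liel_{\bar{1}}^+]\subseteq\lien_1^+\ds\liel_{\bar{1}}^+$, $[\liego^-,\lien_1^+]\subseteq\liel_{\bar{1}}^+$, $[\liego^-,\liel_{\bar{1}}^+]=0$, together with the internal relations among $\mf{s}$, $\mf{t}$ and $\liel_{\bar{1}}^+$, show that $\text{rad}(\liel)=\liego^-\ds\lien^-\ds\lien_1^+\ds\liel_{\bar{1}}^+$ is a $p$-nilpotent $\fz_2$-graded ideal. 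For hypothesis (2) of lemma 3.1 I would take inside $\lien_1^+$ the $m+n-1$ vectors $x_1^2D_i$ $(2\leq i\leq m)$, $x_1x_{m+j}D_{m+n}$ $(1\leq j\leq n-1)$ and $x_1^3D_2$ and check that their weights are linearly independent; as $\dim(\lieh)=m+n-1$, this is exactly the required supply.

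Next I would dispatch the remaining items of proposition 3.2. Condition (2), $\lieb^-_{\liel}=\lieb^-_{\lieg}$, is immediate, since $\liel$ contains the full odd negative part $\liego^-$ together with $\lien^-$ and $\lieh$, so both negative Borels equal $\liego^-\ds\lien^-\ds\lieh$. For condition (3) I would use the $m+n-1$ vectors $D_i$ $(1\leq i\leq m)$ and $x_{m+j}D_{m+j+1}$ $(1\leq j\leq n-1)$ of $\lien^-$, with weights $-\gamma_i$ and $-\gamma_{m+j-1}+2\gamma_{m+j}-(1-\delta_{j,n-1})\gamma_{m+j+1}$, and verify their linear independence. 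Longness, i.e. $\dim(\lien^-)<\dim(\lien^+)$ and $\dim(\liego^-)<\dim(\liego^+)$, I would settle by a dimension count, the truncated divided power structure of $A(m,1)$ forcing the high-degree graded pieces to dominate the degree $-1$ part spanning $\lien^-$ and the low-degree odd part.

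The main obstacle is precisely the bookkeeping of the previous two paragraphs: certifying the mixed bracket inclusions that make $\text{rad}(\liel)$ an ideal, and confirming that each of the two exhibited families of $m+n-1$ explicit weights spans an $(m+n-1)$-dimensional subspace of $\lieh^*$. Everything else is formal. With all hypotheses of proposition 3.2 in force and each $V^-(\lambda)$ indecomposable with simple head, corollary 3.4 applies verbatim and gives that $u(S(m,n,1))$ is of one block.
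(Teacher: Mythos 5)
Your proposal follows the paper's argument essentially verbatim: the same subalgebra $\liel=\liego^-\ds\lien^-\ds\lieh\ds\lien_1^+\ds\liel_{\bar{1}}^+$, the same bracket relations to establish that $\mathrm{rad}(\liel)$ is a $p$-nilpotent ideal, the same two families of $m+n-1$ weight vectors (the $D_i$ and $x_{m+j}D_{m+j+1}$ in $\lien^-$, and the $x_1^2D_i$, $x_1x_{m+j}D_{m+n}$, $x_1^3D_2$ in $\lien_1^+$), and the same final appeal to proposition 3.2 and corollary 3.4. The only cosmetic difference is that you flag the longness of the triangular decomposition as a separate item to verify, which the paper leaves implicit.
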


\subsection{Type H}
Let $ \lieg:=H(m,n,1),$ where $ m=2l,\ n>3.$ Denote $ k=[n/2]. $ For every $ (a)=(a_1,\cdots,a_m)\in I^m $ and $ (b)=(b_1,\cdots,b_u)\in\bb{B}_u\subseteq\bb{B}(n), $ denote $$ X^{(a)}Y^{(b)}= x_1^{a_1}\cdots x_m^{a_m}x_{m+b_1}\cdots x_{m+b_u}. $$ By definition, 
$\lieg=\langle D_H(X^{(a)}Y^{(b)})\mid 
X^{(a)}Y^{(b)}\neq x_1^{p-1} \cdots x_m^{p-1}x_{m+1}\cdots x_{m+n}\rangle. $ 

Fix a maximal torus $ \lieh $ with basis $$ \{ h_i,\ h_{m+j} \mid i=1,\cdots,l;j=1,\cdots,k \} $$
where $ h_i=D_H(x_ix_{l+i}),\  h_{m+j}= D_H(\sqrt{-1}x_{m+j}x_{m+k+j}). $

For $ 1\leq i\leq k, $ set 
$ e_i:=x_{m+i}+\sqrt{-1}x_{m+k+i}\text{ and } f_i:=x_{m+i}-\sqrt{-1}x_{m+k+i}. $ Then both $ D_H(e_i)$ and $D_H(f_i) $ are homogeneous odd elements of degree $ -1 $.
 
Define the following subspaces of $ \lieg: $

\begin{tabular}{ccl}
$ \alpha $ &=& $\langle D_i\mid m+1\leq i\leq m+n \rangle \ds \langle D_H(x_if_j)\mid 1\leq i\leq m; 1\leq j\leq k \rangle; $\\
$ \beta$ &=& $ \langle D_H(x_ix_{m+n})\mid l+1\leq i\leq m \rangle; $\\
$ \lien_1^-$ &=& $\langle D_H(x_ix_j)\mid l+1\leq i, j\leq 2l\text{ or }1\leq i< l\leq j<i+l \rangle  $  $\ds \langle D_1,\cdots,D_m \rangle ;$\\
$ \lien_2^{\prime}$ &=& $\langle D_H(a_{ij}),\ D_H(b_{ij})\mid 1\leq i<j\leq k \rangle $ where $ a_{ij}=f_ie_j,$  $ b_{ij}=f_if_j; $\\
$ \lien_3^{\prime}$ &=& $\langle D_H(f_{i}x_{m+n})\mid 1\leq i\leq k \rangle.  $
\end{tabular}

Suppose $ \lieg=\liego^-\ds \lien^-\ds\lieh\ds\lien^+\ds \liego^+ $ be the triangular decomposition related to maximal torus $ \lieh $ and $ \liege=\lien^-\ds\lieh\ds\lien^+. $ 

One can check that 
$ \lien^-=\lien_1^-\ds\lien_2^- $ where
\[ \lien_2^-=\left \{ \begin{tabular}{ll}
$ \pr{\lien_2}, $ & $ n=2k, $\\
$ \pr{\lien_2}\ds\pr{\lien_3}, $ & $ n=2k+1; $
\end{tabular}\right . 
\text{and }\ 
\liego^-=\left \{ \begin{tabular}{ll}
$ \alpha, $ & $ n=2k, $\\
$ \alpha\ds\beta, $ & $ n=2k+1. $
\end{tabular}\right . 
\]

\begin{remark}
	Above description for $ \lien_2^- $ comes from \cite[section 2.2]{YS2}.
\end{remark}

For each $ (c)=(c_1,\cdots,c_u)\in\bb{B}(k),\ 0\leq u\leq k, $ denote $ f^{(c)}=f_{c_1}\cdots f_{c_u}. $ In this case, the parity of $ D_H(f^{(c)}) $ equals to the parity of $ u=\abs{(c)}. $ 

Now, if $ n=2k $ is even, define 
$ \liel^+ =\langle D_H(x^{(a)} f^{(c)})\mid  a_j=0\text{ if } 1\leq j\leq l; \abs{(a)}+\abs{(c)} \geq 3 \rangle. $

If $ n=2k+1 $ is odd, define 
$ \liel^+ =\langle D_H(x^{(a)} f^{(c)}x_{m+n}^{\delta})\mid  a_j=0\text{ if } 1\leq j\leq l; \delta\in\{0,1\} ;\abs{(a)}+\abs{(c)}+\delta \geq 3 \rangle. $

Let $ \lien_1^+ $ (resp. $ \liel_{\bar{1}}^+ $) be the even (resp. odd) part of $ \liel^+. $ 

One can check that $  \liel:=\liego^-\ds\lien^-\ds\lieh\ds \lien_1^+\ds \liel_{\bar{1}}^+ $ is a restricted subalgebra of $ \lieg. $
Note that for all $ f,g\in A(m,n,1) $ homogeneous,
$$ [D_i, D_H(f)]=D_H(D_i(f)) \text{ and}$$
\[ [D_H(f),D_H(g)]=D_H\left(\sum_{i=1}^{m+n}\sigma(i)(-1)^{\tau(i)\text{d}(f)}D_i(f) D_{\pr{i}}(g)\right). \] 

We have the followings: 
\begin{itemize}
	\item $ [D_H(e_i),D_H(e_j)]=[D_H(f_i),D_H(f_j)]=0; $
	\item $ [D_H(e_i),D_H(f_j)]=[D_H(f_j),D_H(e_i)]=-2\delta_{ij}; $
	\item $ [\lieh,\liego^-\ds \lien^-\ds\lien_1^+\ds \liel_{\bar{1}}^+]\subseteq \liego^-\ds \lien^-\ds\lien_1^+\ds \liel_{\bar{1}}^+; $
	\item $ \liego^-\ds\lien^- $ is $ p $-nilpotent;
	\item $ [ \lien^-,\lien_1^+\ds\liel_{\bar{1}}^+]\subseteq \lien_1^+\ds \liel_{\bar{1}}^+; $
	$ [\liego^-,\lien_1^+]\subseteq \liel_{\bar{1}}^+,\ [\liego^-,\liel_{\bar{1}}^+]=0; $
	\item $ [ \lien_1^+\ds\liel_{\bar{1}}^+,\lien_1^+\ds\liel_{\bar{1}}^+ ]=0. $
\end{itemize}

Therefore, $ \text{rad}(\liel)=\liego^-\ds \lien^-\ds\lien_1^+\ds \liel_{\bar{1}}^+ $ is a $ p $-nilpotent ideal and 
$ \liel $ is a subalgebra satisfying (1) in lemma 3.1.

For each $1\leq i,j\leq l,\ 1\leq u,v\leq k, $ defines $ \gamma_{i},\delta_j\in \Lambda $ by $ \gamma_{i}(h_j)=\delta_{ij}, $ and $\delta_u(h_{m+v})=\delta_{uv}. $  

For $ 1\leq i\leq l$ and $ 1\leq j\leq k, $ $ D_i $ has weight $ -\gamma_i $ while $ D_H(e_j) $ (resp. $ D_H(f_j) $) has weight $ \delta_j $ (resp. $ -\delta_{j} $) with respect to $ \lieh. $ Then $ b_{uv} $ has weight $ -\delta_u-\delta_{v} $ for each $ 1\leq u,v\leq k $
and $ a_{12} $ has weight $ -\delta_1+\delta_2. $ 

Therefore, $ \lien^- $ contains $ l+k $ linear independent vectors $$\{ D_i,\ D_H(b_{j,j+1})\mid 1\leq i\leq l;\ 1\leq j\leq k-1\}\cup \{ D_H(a_{12}) \}. $$
with linear independent weights. Assumption (2) and (3) of proposition 3.2 satisfies.

For $ 1\leq i\leq l,\ D_H(x_{l+i}^3) $ has weight $ 3\gamma_i. $  For $ 1\leq u<v\leq k-1,\ D_H(x_{l+1}^2b_{u,v}) $ has weight $ 2\gamma_1-\delta_u-\delta_ {v}, $ and $ D_H(x_{l+1}^2f_kx_{m+n}) $ has weight $ 2\gamma_1-\delta_k $ if $ n=2k+1. $ Moreover,
$ \lien^+ $ contains $ l+k $ linear independent vectors $ S $  with linear independent weights as following:
	
	If $ k\geq 3 $ is odd, $$S=\{ D_H(x_{l+i}^3), D_H(x_{l +1}^2 b_{j,j+1})\mid 1\leq i\leq l; 1\leq j\leq k-1\}\cup \{ D_H(x_{l+1}^2 b_{1k}) \}; $$
		
	If $ k\geq 3 $ is even, $$S=\{ D_H(x_{l+i}^3), D_H(x_{l +1}^2 b_{j,j+1})\mid 1\leq i\leq l; 1\leq j\leq k-1\}\cup \{ D_H(x_{l+1}^2 b_{2k}) \}; $$
	
	If $ n=5, $ $$S=\{ D_H(x_{l+i}^3)\mid 1\leq i\leq l\}\cup \{ D_H(x_{l +1}^2 b_{12}),D_H(x_{l+1}^2 f_{12}x_{m+5}) \}. $$

Therefore, assumption (2) of lemma 3.1 satisfies if $ n>4. $

\begin{proposition}
	Keep assumptions as above, and let $ \lieg=H(m,n,1)$ with $ n>4, $ then $ u(\lieg) $ is of one block.
\end{proposition}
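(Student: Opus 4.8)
The plan is to verify that $H(m,n,1)$ with $n>4$ satisfies all three hypotheses of Proposition 3.2, so that Corollary 3.4 immediately yields that $u(\lieg)$ is of one block. The intermediate subalgebra is the one constructed above, $\liel=\liego^-\ds\lien^-\ds\lieh\ds\lien_1^+\ds\liel_{\bar{1}}^+$, whose triangular decomposition identifies $\liel_{\bar{1}}^-=\liego^-$, $\lien^-_{\liel}=\lien^-$ and $\lien^+_{\liel}=\lien_1^+$. Reading off the negative Borel part gives $\lieb^-_{\liel}=\liego^-\ds\lien^-\ds\lieh=\lieb^-_{\lieg}$, which is hypothesis (2) of Proposition 3.2. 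Hypothesis (3), that $\lien^-$ carries $\dim(\lieh)=l+k$ linearly independent vectors with linearly independent weights, is witnessed by $\{D_i,\ D_H(b_{j,j+1})\mid 1\le i\le l,\ 1\le j\le k-1\}\cup\{D_H(a_{12})\}$: the weights $-\gamma_i$ span the $\gamma$-directions, and since $-\delta_1-\delta_2$ together with $-\delta_1+\delta_2$ recover both $\delta_1$ and $\delta_2$, the remaining $\delta$-directions are spanned as well.

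It then remains to establish hypothesis (1) of Proposition 3.2 by checking that $\liel$ satisfies Lemma 3.1. First I would confirm condition (1) of Lemma 3.1, namely that $\text{rad}(\liel)=\liego^-\ds\lien^-\ds\lien_1^+\ds\liel_{\bar{1}}^+$ is a $p$-nilpotent $\fz_2$-graded ideal. This reduces to the bracket relations collected above: from $[D_i,D_H(f)]=D_H(D_i(f))$ and the general Hamiltonian commutator identity one checks that $\lieh$ normalises this subspace, that $\liego^-\ds\lien^-$ is $p$-nilpotent, that $\lien_1^+\ds\liel_{\bar{1}}^+$ is abelian, and that both $[\lien^-,\lien_1^+\ds\liel_{\bar{1}}^+]$ and $[\liego^-,\lien_1^+]$ land inside $\lien_1^+\ds\liel_{\bar{1}}^+$; together these force the displayed space to be a $p$-nilpotent ideal.

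The crux --- and the step I expect to be the main obstacle --- is condition (2) of Lemma 3.1: producing $\dim(\lieh)=l+k$ vectors inside $\lien_1^+$ that are linearly independent and carry linearly independent weights. The elements $D_H(x_{l+i}^3)$, of weight $3\gamma_i$ for $1\le i\le l$, dispose of the $\gamma$-directions, so the difficulty is to recover the remaining $k$ directions in the $\delta$'s. Because $\liel^+$ is generated only from monomials $x^{(a)}f^{(c)}$ with $f^{(c)}$ a product of the $f_i$, the available even weight vectors $D_H(x_{l+1}^2 b_{uv})$ have weights $2\gamma_1-\delta_u-\delta_v$, i.e. only signless combinations of the $\delta$'s; such a weight is the incidence vector of the edge $\{u,v\}$ on the vertex set $\{1,\dots,k\}$, and $k$ of them are linearly independent exactly when the chosen edges contain an odd cycle. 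The path $b_{j,j+1}$ supplies $k-1$ edges, and closing the loop with $b_{1k}$ (for $k$ odd) or $b_{2k}$ (for $k$ even) yields an odd cycle provided $k\ge 3$. When $k=2$ the construction collapses, since $b_{2k}=b_{22}=f_2^2=0$; hence for $n=2k=4$ no such family exists. For $n=2k+1=5$, however, the extra odd generator $D_H(x_{l+1}^2 f_k x_{m+n})$ of weight $2\gamma_1-\delta_k$ supplies the missing independent $\delta$-direction. Thus condition (2) holds precisely when $n>4$, and with all three hypotheses of Proposition 3.2 in force the claim follows from Corollary 3.4.
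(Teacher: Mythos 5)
Your proposal is correct and follows essentially the same route as the paper: the same subalgebra $\liel$, the same verification of the hypotheses of Lemma 2.1 and Proposition 2.2, and the same witness sets in $\lien^-$ and $\lien_1^+$, concluding via Corollary 2.5. Your odd-cycle reformulation of the linear independence of the weights $2\gamma_1-\delta_u-\delta_v$ is a nice way to explain the paper's case split on the parity of $k$ and the exclusion of $n=4$, but it does not change the argument.
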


By Proposition 4.2, 4.3 and 4.4, we have our main theorem as follows.

\begin{theorem}\label{main thm}
	Let $ \fk $ be an algebraically closed field with characteristics $ p > 3, $ and $ \lieg=X(m,n,1),\ X\in\{ W,S,H \}, $ be a graded restricted Lie superalgebra of Cartan type over $ \fk $ except if $ X=H $ with $ n=4. $
	
	Then $ u(\lieg) $ is of one block.
\end{theorem}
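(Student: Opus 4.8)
The plan is to establish the theorem one Cartan type at a time, since $X$ ranges over $\{W,S,H\}$ and the three families have rather different internal structure. In every case the strategy is the same: equip $\lieg=X(m,n,1)$ with its triangular decomposition $\lieg=\liego^-\ds\lien^-\ds\lieh\ds\lien^+\ds\liego^+$ relative to a maximal torus $\lieh$ of $\liege$, exhibit an auxiliary subalgebra $\liel$ fitting the hypotheses of the block-degeneracy machinery of Section~2, and then invoke the one-block corollary there. Concretely, each verification must end with the character identity $[V^-(\lambda)]=\sum_{\mu\in\Lambda}p^s2^t[V^+(\mu)]$, after which the fact that every $V^-(\lambda)$ is indecomposable with simple head forces $u(\lieg)$ to have a single block. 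Thus the work reduces, for each type, to producing the subalgebra $\liel$ and checking the linear-independence-of-weights conditions.

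For type $W$ I would use the twisted criterion (Proposition~2.3), because the untwisted one fails its assumption~(3). The point is to take $\liel=\langle D_i\rangle\ds\lieg_0\ds\langle p_i\rangle\iso\mathfrak{pgl}(m+1|n)$, a classical Lie superalgebra, and to manufacture the required bijection of $\Lambda$ from the even restricted automorphism $\alpha$ with $\alpha(e_i)=f_i$, $\alpha(f_i)=e_i$; since $\alpha(\lieb_{\liel}^{\pm})=\lieb_{\liel}^{\mp}$, Lemma~4.1 converts the twist into $[u(\liel)\ten_{u(\lieb_{\liel}^{-})}1_{\lambda}]=[u(\liel)\ten_{u(\lieb_{\liel}^{+})}1_{-\tilde{\alpha}(\lambda)}]$, which is exactly the hypothesis needed, while the vectors $x_i^3D_i$ of weight $2\gamma_i$ supply the independence condition. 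For types $S$ and $H$ I would instead use the untwisted criterion (Proposition~2.2): here one enlarges $\lieb_{\lieg}^{-}$ by a carefully chosen even/odd slice $\lien_1^{+}\ds\liel_{\bar{1}}^{+}$ of the positive part so that $\text{rad}(\liel)$ becomes a $p$-nilpotent $\fz_2$-graded ideal, and then exhibits $\dim(\lieh)$ weight vectors with linearly independent weights inside $\lien^-$ and another $\dim(\lieh)$ inside $\lien_1^{+}$.

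The real labour, and the step I expect to be the main obstacle, is this last explicit production of enough weight vectors with linearly independent weights, computed in the coordinates $\gamma_i,\delta_j$ dual to the torus basis. For type $H$ it is genuinely delicate because the parity of $D_H(f^{(c)})$ equals $\abs{(c)}\bmod 2$, so the admissible generators must be sorted according to whether $k=[n/2]$ is odd or even and whether $n=2k$ or $n=2k+1$; the candidate family of $l+k$ vectors therefore changes case by case (for instance $b_{1k}$ versus $b_{2k}$, together with an extra generator built from $x_{m+n}$ when $n=5$). This counting is precisely what breaks down when $n=4$, where $k=2$ is too small to yield $l+k$ independent weights in $\lien_1^{+}$, and this is the source of the exclusion $X=H,\ n=4$ in the statement. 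Granting the three type-specific results established above for $W$, $S$ and $H$ with $n>4$ (Propositions~4.2, 4.3 and 4.4), the theorem follows at once by taking the union over $X\in\{W,S,H\}$.
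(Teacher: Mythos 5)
Your proposal matches the paper's argument essentially verbatim: the paper likewise deduces the theorem from the three type-specific Propositions (W via the twisted criterion with $\liel\iso\mathfrak{pgl}(m+1|n)$ and the automorphism swapping $e_i$ and $f_i$; S and H via the untwisted criterion with an explicitly constructed $\liel$ whose radical is $p$-nilpotent), with the $H$, $n=4$ exclusion arising exactly where you locate it, in the failure to produce $l+k$ weight vectors with linearly independent weights when $k=2$. No substantive differences to report.
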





\begin{thebibliography}{9}

\bibitem{CLO} B.Cao, L.Luo and K.Ou, {\em Extensions of Inhomogeneous Polynomial Representations for $ \mathfrak{sl}(m+1\mid n) $}, J. Math. Phy. 55, (2014).

\bibitem{DSY} F.Duan, B.Shu and Y.Yao, {\em Blocks in Restricted Representations of Lie Superalgebras of Cartan Type}, preprint.

\bibitem{HN} R.Holmes and D.Nakano, {\em Block Degeneracy and Cartan Invariants for Graded Lie Algebras of Cartan Type}, J. Algebra, 161(1) (1993), 155-170.

\bibitem{Na} D.Nakano, {\em Projective Modules over Lie Algebras of Cartan Type}, Mem. Amer. Math. Soc. 98, (1992), no. 470.

\bibitem{Pe} V.Petrogradski, {\em Identities in the Enveloping Algebras for Modular Lie superalgebras,} J. Algebra, 145(1), (1992), 1-21.

\bibitem{Sh} N,Shomron, {\em Blocks of Lie Superalgebras of Type W(n)}, J. Algebra, 251(2), (2000), 739-750.

\bibitem{YS1} Y.Yao and B.Shu, {\em A Note on Restricted Representations of the Witt Superalgebras}, Chin. Ann. Math. 34B(6), (2013), 921-926.

\bibitem{YS2} Y.Yao and B.Shu, {\em Restricted Representations of Lie Superalgebras of Hamiltonian Type}, Algebr. Represent. Theor.,16, (2013), 615-632.

\bibitem{Zh} C.Zhang, {\em On the simple modules for the restricted Lie superalgebras $ \mf{sl}(n|1) $ }, J. Pure and Applied Algebra, 213(5), (2009), 756-765.
\end{thebibliography}
\end{document}